\let\csname equation*\endcsname\relax
\let\csname endequation*\endcsname\relax
\theoremstyle{plain}
\newtheorem{theorem}{Theorem}
\newtheorem*{theorem*}{Theorem}
\newtheorem*{lemma*}{Lemma}
\newtheorem{proposition}[theorem]{Proposition}
\newtheorem*{proposition*}{Proposition}
\newtheorem*{corollary*}{Corollary}
\theoremstyle{definition}
\newtheorem{definition}{Definition}
\newtheorem*{definition*}{Definition}
\newtheorem{example}{Example}
\newtheorem*{example*}{Example}
\theoremstyle{remark}
\newtheorem{remark}{Remark}
\newtheorem*{remark*}{Remark}
\newtheorem*{conjecture*}{Conjecture}
\newtheorem*{problem*}{Problem}
\newcommand*{\RR}{\mathbb{R}}
\let\R\RR
\newcommand*{\dd}{\mathrm{d}}
\newcommand*{\contr}[1]{\iota_{#1}}
\newcommand*{\liedv}[1]{\mathcal{L}_{#1}}
\newcommand*{\Reeb}{\mathcal{R}}
\DeclareMathOperator{\Leg}{Leg}
\newcommand{\sode}{\Gamma}
\newcommand{\curve}{\sigma}
\newcommand{\parder}[2]{\frac{\partial #1}{\partial #2}}
\newcommand{\parderr}[3]{\frac{\partial^2 #1}{\partial #2\partial #3}}
\let\hat\widehat
\let\oldemph\emph
\let\myemph\emph
\title{\LARGE \bf Nonsmooth Herglotz variational principle}
\author{Asier L\'opez-Gord\'on$^{1}$, Leonardo Colombo$^{2}$, and  Manuel de Le\'on$^{3}$%
\thanks{$^1$A.~L.-G.~(asier.lopez@icmat.es) is with Instituto de Ciencias Matemáticas (ICMAT-CSIC), C/ Nicol\'as Cabrera, 13-15, 28049 Madrid, Spain.}
\thanks{$^2$L.~C.~(leonardo.colombo@car.upm-csic.es) is with Centro de Automática y Robótica (CAR-CSIC), Ctra. M300 Campo Real, Km 0, 200, Arganda del Rey - 28500 Madrid, Spain.}
\thanks{$^3$M.~d.~L.~(mdeleon@icmat.es) is with Instituto de Ciencias Matemáticas (ICMAT-CSIC) and Real Academia de Ciencias, Madrid, Spain.}
\thanks{The authors acknowledge financial support from Grant PID2019-106715GB-C21 funded by MCIN/AEI/ 10.13039/501100011033. Manuel de León and Asier López-Gordón also recieved support from the Grant CEX2019-000904-S funded by MCIN/AEI/ 10.13039/501100011033. Asier López-Gordón would also like to thank MCIN for the predoctoral contract PRE2020-093814.
}
}
\begin{document}
\newgeometry{left=48pt,bottom=43pt,right=48pt,top=60pt}

\maketitle
\thispagestyle{empty}
\pagestyle{empty}

\begin{abstract}
In this paper, the theory of smooth action-dependent Lagrangian mechanics (also known as contact Lagrangians) is extended to a non-smooth context appropriate for collision problems. In particular, we develop a Herglotz variational principle for non-smooth action-dependent Lagrangians which leads to the preservation of energy and momentum at impacts. By defining appropriately a Legendre transform, we can obtain the Hamilton equations of motion for the corresponding non-smooth Hamiltonian system. We apply the result to a billiard problem in the presence of dissipation.  
\end{abstract}


\section{Introduction}
Lagrangian and Hamiltonian systems are well-known for representing a large class of mechanical systems which are conservative.
However, many systems of interest in physics and engineering exhibit a dissipative, rather than a conservative, behaviour. As it is well-known, the equations of motion of a Lagrangian system (i.e., the Euler-Lagrange equations) can be obtained variationally from Hamilton’s principle of least action. A dissipative term may be added to the equations of motion by means of considering an external force besides the Lagrangian or the Hamiltonian function \cite{Godbillon1969,deLeon1989, deLeon2021a,Lopez-Gordon2021}.  
Alternatively, in order to describe mechanical systems with a damping term one can make use of a generalization of Hamilton’s Principle: the so-called Herglotz Principle. Essentially, one considers Lagrangian functions that can depend explicitly on the action (the so-called action-dependent Lagrangians) and, taking variations of the action, one obtains equations of motion, called Herglotz equations \cite{Herglotz1930}, that are like ordinary Euler-Lagrange equations with an extra term accounting for dissipation. In recent years, action-dependent Lagrangians have gained popularity in the theoretical physics \cite{Lazo2017,Lazo2018,Lazo2019}, geometric mechanics \cite{deLeon2021, Rivas2022,deLeon2019,deLeon2019a,Gaset2020} and control \cite{deLeon2020, Maschke2018, vanderSchaft2018} communities, since they can be used for modelling dissipative systems of particles and fields, as well as certain thermodynamical systems.

The possible trajectories considered both in Hamilton’s and Herglotz's principles are usually smooth curves. Nevertheless, many mechanical systems of interest have non-smooth trajectories. As a matter of fact, the trajectories described by a system with impacts are not smooth, as it happens with the so-called hybrid systems.  
Mechanical systems with impacts are usually modeled as hybrid systems. Hybrid systems are dynamical systems with continuous-time and discrete-time components in their dynamics. This class of dynamical systems is capable of modeling several physical systems, such as UAVs (unmanned aerial vehicles) systems \cite{Lee2013} and bipedal robots among many others \cite{Westervelt2018,Goebel2012,Schaft2000,EyreaIrazu2021,Colombo2020,Colombo2020a,Colombo2022,Colombo2022a,Bloch2017,EyreaIrazu2022}. The disadvantage of this approach is that the impact map --the map characterizing the change of velocity in the instant of the impact-- has to be defined \textit{ad hoc} or obtained in some phenomenological fashion, e.g., by the Newtonian impact law \cite{Brogliato1996}. 
Another approach consists on considering an impulsive constraint acting on the instant of the impact \cite{Colombo2022, Lacomba1990,Ibort1997,Ibort1998,Ibort2001}. 
Alternatively, in order to characterize the dynamics of a mechanical system with impacts, one can consider a variational principle for which the possible curves are not smooth at certain points. An extension of Hamilton’s Principle to a nonsmooth setting was developed by Fetecau, Marsden, Ortiz and West \cite{Fetecau2003}. Inspired by their approach, in this paper we develop a non-smooth Herglotz principle for dissipative Lagrangian systems with impacts. The main advantage of this approach is that one can characterize the dynamics of the system with dissipation and impacts by means of just the variational principle, without the need of considering additional forces, maps or constraints.



The remainder of the paper is structured as follows. In Section \ref{sec_herglotz} Herglotz variational principle for non-smooth Lagrangian and Hamiltonian systems is presented. Section \ref{sec_billiard} presents a case study: a billard with dissipation. Section \ref{sec_contact} recalls the relation of action-dependent Lagrangians with contact Hamiltonian and Lagrangian systems. The notion of simple hybrid contact systems and their relation with action-dependent Lagrangians is explained in Section \ref{sec_hybrid}.
 We finish the paper with some outlooks in Section \ref{sec_conclusions}.



\section{Herglotz principle for nonsmooth action-dependent Lagrangians}\label{sec_herglotz}

\subsection{Herglotz variational principle for smooth Lagrangians}
Geometrically, the configuration space (the space of positions) $Q$ of a mechanical system is a differentiable manifold of dimension $n$ with local coordinates $q=(q^1,\ldots,q^n)$. At each point $q\in Q$, the tangent space to $Q$ at $q$, denoted by $T_{q}Q$, is the vector space formed by the vectors which are tangent to the curves in $Q$ passing by $q$. The union of the tangent spaces for every $q\in Q$ is called the tangent bundle of $Q$ and denoted by $TQ$. It represents the space of positions and velocities of the system. The tangent bundle has local coordinates $v_q=(q^1,\ldots, q^n,\dot{q}^{1},\ldots,\dot{q}^{n})\in TQ$ with $\hbox{dim}(TQ)=2n$ and canonical projection $\tau_Q\colon TQ \to Q$, $v_q \mapsto q$.

A Lagrangian $L:TQ\times\mathbb{R}\to\mathbb{R}$ is said to be regular if $\det W\neq 0$, where $\displaystyle{W=(W_{ij})\coloneqq \left(\frac{\partial^2L}{\partial\dot{q}^{i}\partial\dot{q}^{j}}\right)}$ for all $i,j$ with $1\leq i,j\leq n$.

Consider a Lagrangian function $L: T Q \times \RR \rightarrow \RR$ and fix two points $q_{1}, q_{2} \in Q$ and an interval $[a, b] \subset \RR$. Let us denote by $\Omega\left(q_{1}, q_{2},[a, b]\right) \subseteq\left(\mathcal{C}^{\infty}([a, b] \rightarrow Q)\right.$ the space of smooth curves $\curve$ such that $\curve(a)=q_{1}$ and $\curve(b)=q_{2}$. This space has the structure of an infinite dimensional smooth manifold whose tangent space at $\curve$ is given by the set of vector fields over $\curve$ that vanish at the endpoints \cite[Proposition 3.8.2]{Abraham2008}, that is,
\begin{equation}
\begin{aligned}
  T_{\curve} \Omega\left(q_{1}, q_{2},[a, b]\right)=&\left\{v_{\curve} \in \mathcal{C}^{\infty}([a, b] \rightarrow T Q) \mid
  \tau_{Q} \circ v_{\curve}=\curve,\right.\\&\left.\qquad v_{\curve}(a)=0,\ v_{\curve}(b)=0\right\}.
\end{aligned}
\end{equation}
The elements of $T_{\curve} \Omega\left(q_{1}, q_{2},[a, b]\right)$ will be called \emph{infinitesimal variations} of the curve $\curve$. Let
\begin{equation}
  \mathcal{Z}: \mathcal{C}^{\infty}([a, b] \rightarrow Q) \rightarrow \mathcal{C}^{\infty}([a, b] \rightarrow \RR)
\end{equation}
be the operator that assigns to each curve $\curve$ the function $\mathcal{Z}(\curve)$ that solves the following ordinary differential equation (ODE):
\begin{equation}
\begin{aligned}
    &\frac{\mathrm{d} \mathcal{Z}(\curve)(t)}{\mathrm{d} t}=L(\curve(t), \dot{\curve}(t), \mathcal{Z}(\curve)(t)), \\
    & \mathcal{Z}(\curve)(a)=0.
\end{aligned}
\end{equation}
Now we define the \emph{action functional} $\mathcal{A}$ as the map which assigns to each curve the solution to the previous ODE evaluated at the endpoint, namely,
\begin{equation}
\begin{aligned}
  \mathcal{A}: \Omega\left(q_{1}, q_{2},[a, b]\right) & \rightarrow \RR \\
\curve & \mapsto \mathcal{Z}(\curve)(b).
\end{aligned}
\end{equation}
We will say that a path $\curve \in \Omega\left(q_{1}, q_{2}, Q\right)$ satisfies the \emph{Herglotz variational principle} if it is a critical point of $\mathcal{A}$, i.e.,
\begin{equation}
  T_{\curve} \mathcal{A}=0.
\end{equation}
These critical points are curves which satisfy the \emph{Herglotz equations} \cite{Herglotz1930, Georgieva2011, deLeon2019}:
\begin{equation}
    \frac{\partial L}{\partial q^{i}}-\frac{\mathrm{d}}{\mathrm{d} t} \frac{\partial L}{\partial \dot{q}^{i}}+\frac{\partial L}{\partial \dot{q}^{i}} \frac{\partial L}{\partial z}=0.
    \label{Herglotz_eqs}
\end{equation}

Observe that Herglotz equations are like Euler-Lagrange equations with an additional term due to the dependence of the Lagrangian on the action: $\frac{\partial L}{\partial \dot{q}^{i}} \frac{\partial L}{\partial z}$. For instance, if the Lagrangian is of the form $L(q, \dot q, z) =\frac{1}{2} \dot q^T \dot q - V(q)- \gamma z $ for some $\gamma \in \RR$, this extra term is $-\gamma \dot q^i$,  corresponding to a dissipation linear in the velocities.


\subsection{Herglotz principle for nonsmooth Lagrangians} 

Since trajectories with impacts are not smooth curves, the space of curves will no longer be a smooth manifold. Therefore, Herglotz variational principle cannot be generalized to a nonsmooth setting in a straightforward manner. In order to overcome this problem, we make use of the Fetecau, Marsden, Ortiz and West's approach \cite{Fetecau2003}: extend the problem to the nonautonomous case so that both position variables and time are functions of a parameter $\tau$. In this way, the impact can be fixed in $\tau$ space while remaining variable in both configuration and time spaces. Additionally, this allows to define a path space $\mathcal M$ which is indeed a smooth manifold. By taking variations on this submanifold, we shall obtain a nonsmooth Herglotz principle.

Consider a configuration manifold $Q$ and a submanifold with boundary $C \subset Q$, which represent the subset of admissible configurations. Let $L: T Q \times \RR \rightarrow \RR$ be a regular Lagrangian. Let us introduce the \emph{path space}
\begin{equation*}
  \mathcal{M}\coloneqq \mathcal{T} \times \mathcal{Q}\left([0,1], \tau_{i}, \partial C, Q\right),
\end{equation*}
where
\begin{equation*}
\begin{aligned} 
  &\mathcal{T}\coloneqq \left\{c_{t} \in C^{\infty}([0,1], \mathbb{R}) \mid c_{t}^{\prime}>0 \text { in }[0,1]\right\},\\  
  &\mathcal{Q}\left([0,1], \tau_{i}, \partial C, Q\right)\coloneqq\left\{c_{q}:[0,1] \rightarrow Q \mid c_{q} \text { is a } C^{0},\right.\\
  &\left. \qquad\qquad\text { piecewise } C^{2} \text { curve, } 
  \right.\\
  &\qquad\left.c_{q}(\tau) \text { has only one singularity at } \tau_{i}, c_{q}\left(\tau_{i}\right) \in \partial C\right\}.
\end{aligned}
\end{equation*}
Here $\displaystyle{c_t^\prime(0) = \lim_{t\to 0^+} c_t^\prime(t)}$ and $\displaystyle{c_t^\prime(1) = \lim_{t\to 1^-} c_t^\prime(t)}$ are understood, and similarly for $c_q^\prime$ and higher derivatives.
A path $c \in \mathcal{M}$ is a pair $c=\left(c_{t}, c_{q}\right)$. Given a path, the \emph{associated curve} $q:\left[c_{t}(0), c_{t}(1)\right] \rightarrow Q$ is given by $q(t) = c_q \circ c_t^{-1} (t)$\footnote{Notice that, since $c_t^\prime>0$, each $c_t\in \mathcal{T}$ is injective and thus invertible (if it is not surjective, it suffices to restrict the codomain), so $q$ is well-defined. }.
Let $\mathcal C$ denote the set of all paths $q(t) \in Q$.

The moment of impact $\tau_i\in (0,1)$ is fixed in the $\tau$-space, but can vary in the $t$-space according to $t_i=c_t(\tau_i)$. One can show that $\mathcal{T}$ and $\mathcal{Q}\left([0,1], \tau_{i}, \partial C, Q\right)$, and hence $\mathcal{M}$, are smooth manifolds \cite{Fetecau2003}.  The tangent space at $c_q\in \mathcal Q$ is given by
\begin{align*}
  T_{c_q} \mathcal{Q}=&\left\{v:[0,1] \rightarrow T Q \mid v \text { is a } C^{0} \text { piecewise } C^{2} \operatorname{map},\right.\\ & \left.\qquad\qquad\qquad v\left(\tau_{i}\right) \in T_{c_q\left(\tau_{i}\right)} \partial C\right\}.
\end{align*}

Let $\hat \Omega \left(q_1, q_2, [0,1]  \right)\subset \mathcal{M}$ be the subset of curves such that $c_q(0)=q_1$ and $c_q(1)=q_2$.
Consider the operator
\begin{equation*}
  \hat{\mathcal{Z}}: \hat \Omega \left(q_1, q_2, [0,1]  \right) \to \mathcal{T}
\end{equation*}
that assigns to each $c_q\in \mathcal{M}$ the solution of the following ODE:
\begin{equation*}
\begin{aligned} 
  & \frac{\mathrm{d} \hat{\mathcal {Z}}} {\mathrm{d} \tau} 
  = L \left( c_q(\tau), \frac{c_q'(\tau)}{c_q'(\tau)}, \hat{\mathcal {Z}}(c_q, c_t) (\tau)   \right) c_t'(\tau)  ,\\
  &\hat{\mathcal {Z}}(c_q, c_t)(0) = \hat{z}_0,
\end{aligned}
\end{equation*}
and denote by $\hat{\mathcal {A}}$ the functional
\begin{equation*}
\begin{aligned} 
  \hat{\mathcal {A}}:   \hat \Omega \left(q_1, q_2, [0,1]  \right) &\to \RR\\
  (c_q,c_t) & \mapsto \hat{\mathcal {Z}}(c_q, c_t) (1).
\end{aligned}
\end{equation*}

\begin{theorem}[Nonsmooth Herglotz variational principle] \label{theorem_nonsmooth_lagrangian}
Let $L:TQ \times \RR$ be a smooth and regular Lagrangian function. 
Let $c=(c_q, c_t)$ be a curve in $\hat \Omega \left(q_1, q_2, [0,1]  \right)$, and let $\chi (\tau)= \left(c_q(\tau), \frac{c_q'(\tau)}{c_t'(\tau)}, \hat{\mathcal{Z}}(c)(\tau)\right) \subset TQ\times \RR$. Then, $c$ is a critical point of $\hat{\mathcal A}$ if and only if
\begin{subequations}
\begin{flalign}
  &\frac{\partial L} {\partial q^i} (\chi(\tau)) 
    -\frac{\mathrm{d} } {\mathrm{d}t} \frac{\partial L} {\partial \dot q^i } (\chi(\tau))  
    \nonumber\\
    &\qquad \qquad \ +\frac{\partial L} {\partial \dot q^i } (\chi(\tau))  \frac{\partial L} {\partial  z} (\chi(\tau))   = 0, \label{equation_Herglotz} \\
    &\frac{\mathrm{d} } {\mathrm{d}t} E_L(\chi(\tau))
    = \frac{\partial L} {\partial z} (\chi(\tau))\ E_L(\chi(\tau)),
    \label{equation_energy_dissipation}
\end{flalign}
\end{subequations}
for $\tau \in [0, \tau_i) \cup (\tau_i,1]$, and
\begin{equation}
\begin{aligned}
  & \frac{\partial L} {\partial \dot q^i} (\chi(\tau_i^-))  v^i
  = \frac{\partial L} {\partial \dot q^i} (\chi(\tau_i^+)) v^i, \\
  & E_L (\chi(\tau_i^-))   = E_L (\chi(\tau_i^+)) ,
\end{aligned} \label{conditions_impact}
\end{equation}
where $\displaystyle{\chi(\tau_i^\pm) = \lim_{\tau \to \tau_i^\pm} \chi(\tau)}$, for any $v \in T_{c_q(\tau_i)}\partial C$. 
\end{theorem}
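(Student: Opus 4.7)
The plan is to compute the first variation $\delta\hat{\mathcal{A}}$ along arbitrary infinitesimal variations $(\delta c_q,\delta c_t)\in T_c\hat\Omega$---so $\delta c_q(0)=\delta c_q(1)=0$, $\delta c_t(0)=\delta c_t(1)=0$, and $\delta c_q(\tau_i)\in T_{c_q(\tau_i)}\partial C$---and to characterize the curves for which it vanishes identically. Linearizing the ODE defining $\hat{\mathcal{Z}}$ around $(c_q,c_t)$ yields a first-order linear ODE for $\delta\hat{\mathcal{Z}}$ whose source is built from $\delta c_q^i$, $(\delta c_q^i)'$, and $\delta c_t'$; using $\delta(c_q'/c_t')^i=(\delta c_q^i)'/c_t'-(c_q^i)'\delta c_t'/(c_t')^2$, the coefficient of $\delta c_t'$ collapses to $-E_L(\chi)$ via the identity $L-(\partial L/\partial\dot q^i)\dot q^i=-E_L$. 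Introducing the integrating factor $\mu(\tau)=\exp\!\left(-\int_0^\tau (\partial L/\partial z)(\chi)\,c_t'\,ds\right)$ and using $\delta\hat{\mathcal{Z}}(0)=0$, I obtain $\mu(1)\,\delta\hat{\mathcal{A}}$ as a single integral of $\mu$ against this source over $[0,1]$.

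Because $c_q$ is only $C^0$ at $\tau_i$, I split this integral as $\int_0^{\tau_i}+\int_{\tau_i}^1$ and integrate by parts separately on each piece, retaining the boundary terms at $0$, $\tau_i^-$, $\tau_i^+$ and $1$; the contributions at $0$ and $1$ vanish by the endpoint conditions on the variations. Testing against variations compactly supported in $(0,\tau_i)$ or $(\tau_i,1)$, dividing by $c_t'>0$, and converting $(1/c_t')\,d/d\tau$ to $d/dt$, the coefficient of $\delta c_q^i$ gives precisely \eqref{equation_Herglotz}, while $\mu'/\mu=-(\partial L/\partial z)c_t'$ reduces the coefficient of $\delta c_t$ (after one more IBP of $-\mu E_L\,\delta c_t'$) to \eqref{equation_energy_dissipation}.

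With the interior equations imposed, the only surviving part of $\delta\hat{\mathcal{A}}$ is the boundary contribution at $\tau_i$, equal to $\mu(\tau_i)$ times
\[
\left[\tfrac{\partial L}{\partial\dot q^i}(\chi(\tau_i^-))-\tfrac{\partial L}{\partial\dot q^i}(\chi(\tau_i^+))\right]\delta c_q^i(\tau_i)+\bigl[E_L(\chi(\tau_i^+))-E_L(\chi(\tau_i^-))\bigr]\delta c_t(\tau_i).
\]
Since $\mu(\tau_i)>0$, $\delta c_t(\tau_i)\in\RR$ is arbitrary, and $\delta c_q(\tau_i)$ ranges over all of $T_{c_q(\tau_i)}\partial C$, the two matching conditions in \eqref{conditions_impact} are equivalent to the vanishing of this expression; reading the computation in reverse gives the converse implication. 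The main technical delicacy is this splitting: the jump of $\chi$ across $\tau_i$ is precisely what produces the boundary terms that encode the impact, and the submanifold constraint $c_q(\tau_i)\in\partial C$ is what confines $\delta c_q(\tau_i)$ to $T_{c_q(\tau_i)}\partial C$, explaining why the momentum-jump condition is required only to annihilate tangent vectors to $\partial C$ rather than to vanish componentwise.
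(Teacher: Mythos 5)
Your argument is correct and follows essentially the same route as the paper's proof: linearize the defining ODE for $\hat{\mathcal{Z}}$, apply the integrating factor $\mu(\tau)=\exp\left(-\int_0^\tau (\partial L/\partial z)(\chi)\,c_t'\,\dd s\right)$, split the resulting integral at $\tau_i$, integrate by parts on each piece, and read off the interior equations from the coefficients of $\delta c_q$ and $\delta c_t$ and the impact conditions from the boundary terms at $\tau_i^\pm$, identifying $L-(\partial L/\partial\dot q^i)\dot q^i$ with $-E_L$. The only difference from the paper is notational (variations written as $\delta c$ rather than via an explicit one-parameter family $c^\lambda$).
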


\begin{proof}
Let $c=(c_q,c_t)\in  \hat \Omega \left(q_1, q_2, [0,1]  \right)$ be a curve. 
Consider a smoothly parametrized family of curves $c^\lambda=(c_q^\lambda, c_t^\lambda)$ in $\hat \Omega \left(q_1, q_2, [0,1]  \right)$ such that $c^0=c$,
  $
  u = \left.\frac{\mathrm{d}c_q^\lambda} {\mathrm{d}\lambda} \right|_{\lambda=0}$, and
  $\theta = \left.\frac{\mathrm{d}c_t^\lambda} {\mathrm{d}\lambda} \right|_{\lambda=0}.
  $
Let $\varphi=T_{c} \hat{\mathcal{Z}}(u,\theta)$, so that $T_c \hat{\mathcal{A}}(u, \theta)=\varphi(1)$. Observe that $\varphi(0)=0$, since $\hat{\mathcal{Z}}(c^\lambda)(0)= \hat{z}_0$ for every $\lambda$. 
We have that
\begin{equation*}
\begin{aligned} 
  \varphi^{\!\prime}(\tau)
  & = \left. \frac{\mathrm{d} } {\mathrm{d}\tau} \frac{\mathrm{d} } {\mathrm{d}\lambda}   \hat{\mathcal{Z}}(c^\lambda(\tau))\right|_{\lambda=0}
  = \left.  \frac{\mathrm{d} } {\mathrm{d}\lambda} \frac{\mathrm{d} } {\mathrm{d}\tau}  \hat{\mathcal{Z}}(c^\lambda(\tau))\right|_{\lambda=0}\\
  &= \left.  \frac{\mathrm{d} } {\mathrm{d}\lambda}  \left[
    L \left( c_q^\lambda(\tau), \frac{c_q^{\lambda \prime}(\tau)}{c_q^{\lambda \prime}(\tau)}, \hat{\mathcal {Z}}(c_q^\lambda, c_t^\lambda) (\tau)   \right) c_t^{\lambda \prime}(\tau) 
    \right] \right|_{\lambda=0}\\
  & = \left[ \frac{\partial L} {\partial q^i} (\chi(\tau)) u^i(\tau) 
      + \frac{\partial L} {\partial \dot q^i } (\chi(\tau)) \frac{1}{c_t'(\tau)} u^{\prime i} (\tau)  \right.\\ &\left. \quad
      -\frac{\partial L} {\partial \dot q^i } (\chi(\tau)) \frac{c_q'(\tau)}{c_t'(\tau)^2} \theta(\tau)
      + \frac{\partial L} {\partial z} (\chi(\tau)) \varphi(\tau)
   \right] c_t'(\tau)\\
&  +  L \left( c_q^\lambda(\tau), \frac{c_q^{\lambda \prime}(\tau)}{c_q^{\lambda \prime}(\tau)}, \hat{\mathcal {Z}}(c_q^\lambda, c_t^\lambda) (\tau)   \right) \theta'(\tau).
\end{aligned}
\end{equation*}

An integrating factor for this ODE is 
$
  \mu (\tau) = \exp \left( - \int_0^\tau \frac{\partial L} {\partial z} (\chi(s))\ c_t'(s) \dd s  \right),
  $
so
\begin{equation*}
\begin{aligned} 
    \varphi(\tau) \mu(\tau) 
    &= \int_0^\tau \mu(s) c_t'(s)
    \left[ \frac{\partial L} {\partial q^i} (\chi(s)) u^i(s) 
      \right.\\&\left.+ \frac{\partial L} {\partial \dot q^i } (\chi(s)) \frac{1}{c_t'(s)} u^{\prime i} (s)-\frac{\partial L} {\partial \dot q^i } (\chi(s)) \frac{c_q'(s)}{c_t'(s)^2} \theta'(s) \right] \ \dd s \\&+ \int_0^\tau \mu(s) L (\chi(s)) \theta'(s)\ \dd s\\
    &= \int_0^\tau \mu(s) c_t'(s)  u^i(s)
    \left[ \frac{\partial L} {\partial q^i} (\chi(s))\right.\\ &\left.
+ \frac{\partial L} {\partial \dot q^i } (\chi(s)) \frac{1}{c_t'(s)}
       \right] \ \dd s   \\  
  & \quad + \int_0^\tau \mu(s) \theta'(s)
  \left[ L (\chi(s)) 
  -\frac{\partial L} {\partial \dot q^i } (\chi(s)) \frac{c_q'(s)}{c_t'(s)}
    \right]\dd s.
\end{aligned}
\end{equation*}
Integrating by parts and taking into account that $u(0)=u(1)=0$ and $\theta(0)=\theta(1)=0$, we obtain
\allowdisplaybreaks
\begin{align} 
  \varphi(1)& \mu (1)
  = \int_0^{\tau_i} \mu(s) c_t'(s)  u^i(s)
    \left[ \frac{\partial L} {\partial q^i} (\chi(s)) 
    -\frac{\mathrm{d} } {\mathrm{d}t} \frac{\partial L} {\partial \dot q^i } (\chi(s))  \right.\\
    &\qquad\left.+\frac{\partial L} {\partial \dot q^i } (\chi(s))  \frac{\partial L} {\partial  z} (\chi(s)) \right] \ \dd s   \\  
  & \quad - \int_0^{\tau_i} \mu(s) \theta(s) 
  \frac{\mathrm{d} } {\mathrm{d}s} \left[ L (\chi(s)) 
  -\frac{\partial L} {\partial \dot q^i } (\chi(s)) \frac{c_q'(s)}{c_t'(s)}
    \right] \dd s\\
    & \quad + \int_0^{\tau_i} \mu(s) \theta(s) 
  \left[ L (\chi(s)) 
  \right.\\ &\left.\qquad
  -\frac{\partial L} {\partial \dot q^i } (\chi(s)) \frac{c_q'(s)}{c_t'(s)}
    \right] \frac{\partial L} {\partial z} (\chi(s))\ c_t'(s)  \dd s\\
  & \quad + \int_{\tau_i}^1 (\cdots)\ \dd s\\
  &\quad 
  +\left. \frac{\partial L} {\partial \dot q^i} (\chi(s)) u^i(s) \mu(s) \right|_{\tau_i^+}^{\tau_i^-}
  \\&+ \left.\mu(s) \theta(s)  \left[ L (\chi(s)) 
  -\frac{\partial L} {\partial \dot q^i } (\chi(s)) \frac{c_q'(s)}{c_t'(s)}
    \right] \right|_{\tau_i^+}^{\tau_i^-}.
\end{align}
Since $\mu(\tau)$ is nonzero, $\varphi(0)$ vanishes for every $(u,\theta)$ (i.e., $\chi$ is a critical point of $\hat{\mathcal{A}}$) if and only if
\begin{subequations}
\begin{flalign*}
  &\frac{\partial L} {\partial q^i} (\chi(\tau)) 
    -\frac{\mathrm{d} } {\mathrm{d}t} \frac{\partial L} {\partial \dot q^i } (\chi(\tau))  
    +\frac{\partial L} {\partial \dot q^i } (\chi(\tau))  \frac{\partial L} {\partial  z} (\chi(\tau))   = 0,\\
    &\frac{\mathrm{d} } {\mathrm{d}\tau} \left[ L (\chi(\tau)) 
  -\frac{\partial L} {\partial \dot q^i } (\chi(\tau)) \frac{c_q'(\tau)}{c_t'(\tau)}
    \right]\\
\qquad &= \left[ L (\chi(\tau)) 
  -\frac{\partial L} {\partial \dot q^i } (\chi(\tau)) \frac{c_q'(\tau)}{c_t'(\tau)}
    \right] \frac{\partial L} {\partial z} (\chi(\tau))\ c_t'(\tau),
\end{flalign*}
\end{subequations} for $\tau \in [0, \tau_i) \cup(\tau_i,1]$, and
\begin{subequations}
\begin{flalign*}
   \frac{\partial L} {\partial \dot q^i} (\chi(\tau_i^-))  v^i
  &= \frac{\partial L} {\partial \dot q^i} (\chi(\tau_i^+)) v^i, \\
   L (\chi(\tau_i^-)) 
  &-\frac{\partial L} {\partial \dot q^i } (\chi(\tau_i^-)) \frac{c_q'(\tau_i^-)}{c_t'(\tau_i^-)}
 \\& = L (\chi(\tau_i^+)) 
  -\frac{\partial L} {\partial \dot q^i } (\chi(\tau_i^+)) \frac{c_q'(\tau_i^+)}{c_t'(\tau_i^+)},
\end{flalign*}
\end{subequations}
for any $v \in T_{c_q(\tau_i)}\partial C$. The result follows from the chain rule and the definition of $E_L$.
\end{proof}

\begin{remark}
Equation \eqref{equation_energy_dissipation} is redundant. Indeed, we have that
\begin{equation*}
\begin{aligned} 
  \frac{\mathrm{d} E_L  } {\mathrm{d}t} 
  &= \frac{\mathrm{d} } {\mathrm{d}t}
  \left( \frac{\partial L} {\partial \dot q^i}  \dot q^i -L  \right) \\ 
 & =  \left( \frac{\mathrm{d} } {\mathrm{d}t}\frac{\partial L} {\partial \dot q^i}   - \frac{\partial L} {\partial q^i}  \right) \dot q^i
  - \frac{\partial L} {\partial z} \dot z\\
&  = \left( \frac{\partial L} {\partial \dot q^i}  \dot q^i -L  \right) \frac{\partial L} {\partial z}
  = E_L \frac{\partial L} {\partial z},
\end{aligned}
\end{equation*}
along solutions of the Herglotz equations \eqref{equation_Herglotz}. This equation expressed the rate of dissipation of the energy.
\end{remark}

As it is well-known, the conserved quantities of an action-independent Lagrangian can be used to integrate its equations of motion. Although action-dependent Lagrangians may have conserved quantities as well, it is more natural to consider the so-called dissipated quantities. They can be used to integrate the equations of motion of an action-dependent Lagrangian system (see Section \ref{sec_billiard}).
\begin{definition}
Given an action-dependent Lagrangian function $L\colon TQ \times  R \to R$, a \emph{dissipated quantity} is a function $f\colon TQ \times  R \to R$ which is dissipated at the same rate as the energy, namely,
\begin{equation}
  \frac{\mathrm{d}} {\mathrm{d}t} f\left(q(t), \dot q(t), z(t)\right) =  \frac{\partial L} {\partial z}\left(q(t), \dot q(t), z(t)\right)\, f \left(q(t), \dot q(t), z(t)\right),
\end{equation}
where $\left(q(t), \dot q(t), z(t)\right)$ is a curve on $TQ \times \R$ that satisfies Herglotz equations.
\end{definition}


\subsection{Nonsmooth Hamiltonian equations for contact Hamiltonian systems}
Given a Lagrangian function $L: T Q \times \mathbb{R} \rightarrow \mathbb{R}$, we can define the \emph{Legendre transform} $\Leg: T Q \times \RR \to T^{*} Q \times \RR$ by
\begin{equation}
\begin{aligned} 
  \Leg: 
  \left( q^i, \dot q^i, z  \right) & \mapsto \left( q^i, \frac{\partial L} {\partial \dot q^i}, z  \right).
\end{aligned}
\end{equation}
Hereinafter, we will assume that the Lagrangian $L$ is \emph{hyper-regular}, i.e., the Legendre transform is a diffeomorphism. The Hamiltonian function $H$ is then given by $H=E_L\circ \Leg^{-1}$.

The Hamiltonian counterpart of Theorem \ref{theorem_nonsmooth_lagrangian} is as follows.

\begin{proposition}[Hamiltonian nonsmooth Herglotz principle]\label{Herglotz_Hamiltonian}
Let $H$ be a regular Hamiltonian function on $T^*Q\times \RR$.
Let $\xi=(q^i, p_i, \mathcal Z)$ be a continuous and piecewise $C^2$ curve on $T^*Q\times \RR$ whose only singularities occur at $t_i$. Then, 
\begin{equation}
\begin{array}{ll} 
   \dfrac{\mathrm{d} q^i} {\mathrm{d}t} = \dfrac{\partial H} {\partial p_i}(\xi(t)),
  & \dfrac{\mathrm{d}p_i} {\mathrm{d}t}
  = - \dfrac{\partial H} {\partial q^i}(\xi(t)) - p_i \dfrac{\partial H} {\partial z}(\xi(t)),
\end{array}
\label{eqs_vector_Hamiltonian_Herglotz}
\end{equation}
for $t\neq t_i$, and
\begin{equation}
\begin{array}{ll} 
  p_i (\xi (t_i^-))  v^i 
  = p_i (\xi (t_i^+))  v^i ,
  &H (\xi (t_i^-)) = H  (\xi (t_i^+)),
\end{array}
\label{eqs_impact_Hamiltonian_Herglotz}
\end{equation}
where $\displaystyle{\xi(\tau_i^\pm) = \lim_{\tau \to \tau_i^\pm} \xi(\tau)}$.

\end{proposition}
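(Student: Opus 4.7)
The plan is to derive the Hamiltonian statement from the Lagrangian one (Theorem \ref{theorem_nonsmooth_lagrangian}) by transporting both the equations of motion and the impact conditions through the Legendre diffeomorphism $\Leg$. I would set $\xi = \Leg \circ \chi$, so that $p_i = \partial L / \partial \dot q^i$ along the curve, and recall the standard identities for a hyper-regular Lagrangian: differentiating $H(q,p,z) = p_i \dot q^i - L(q,\dot q,z)$ with $\dot q$ viewed as a function of $(q,p,z)$ via $\Leg^{-1}$ gives
\begin{equation*}
 \frac{\partial H}{\partial p_i} = \dot q^i, \qquad
 \frac{\partial H}{\partial q^i} = -\frac{\partial L}{\partial q^i}, \qquad
 \frac{\partial H}{\partial z} = -\frac{\partial L}{\partial z}.
\end{equation*}

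Next I would translate Herglotz's equation \eqref{equation_Herglotz} using these relations. The first Hamilton equation $\dot q^i = \partial H / \partial p_i$ is nothing but the defining property of $\Leg$. For the second, substituting $\dot p_i = (\mathrm{d}/\mathrm{d}t)(\partial L/\partial \dot q^i)$ into \eqref{equation_Herglotz} yields
\begin{equation*}
 \dot p_i = \frac{\partial L}{\partial q^i} + \frac{\partial L}{\partial \dot q^i}\,\frac{\partial L}{\partial z} = -\frac{\partial H}{\partial q^i} - p_i\,\frac{\partial H}{\partial z},
\end{equation*}
which is exactly the second half of \eqref{eqs_vector_Hamiltonian_Herglotz}. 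These hold for $\tau \in [0,\tau_i)\cup(\tau_i,1]$, which via the time reparametrisation $t = c_t(\tau)$ corresponds precisely to $t \neq t_i$, with $t_i = c_t(\tau_i)$.

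For the impact conditions \eqref{conditions_impact} the translation is essentially tautological. The momentum matching is immediate, since $p_i(\xi(t_i^\pm)) = (\partial L/\partial \dot q^i)(\chi(\tau_i^\pm))$, so the first equality in \eqref{conditions_impact} gives $p_i(\xi(t_i^-))v^i = p_i(\xi(t_i^+))v^i$ for every $v \in T_{c_q(\tau_i)}\partial C$. For the energy condition, the very definition $H = E_L \circ \Leg^{-1}$ implies $H(\xi(t)) = E_L(\chi(\tau))$ whenever $\xi = \Leg\circ\chi$, so continuity of $E_L$ across the impact is equivalent to continuity of $H$, yielding the second equality in \eqref{eqs_impact_Hamiltonian_Herglotz}.

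The only non-mechanical point is a bookkeeping obstacle: one has to check that the curve $\xi$ on $T^*Q\times\RR$ inherits the right regularity from $\chi$ and that its sole singularity sits at $t_i = c_t(\tau_i)$. This follows because $c_t' > 0$ makes $c_t$ a $C^\infty$-diffeomorphism onto its image and $\Leg$ is a diffeomorphism by hyper-regularity, so $\xi$ is $C^0$ and piecewise $C^2$ with the single singular point at $t_i$, as required by the statement.
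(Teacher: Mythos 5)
Your proposal is correct and follows exactly the route the paper intends: the paper states Proposition~\ref{Herglotz_Hamiltonian} as ``the Hamiltonian counterpart of Theorem~\ref{theorem_nonsmooth_lagrangian}'' immediately after introducing the hyper-regular Legendre transform and $H=E_L\circ\Leg^{-1}$, and omits the proof precisely because it is the standard transport of \eqref{equation_Herglotz} and \eqref{conditions_impact} through $\Leg$ that you carry out. Your computation of $\partial H/\partial p_i$, $\partial H/\partial q^i$, $\partial H/\partial z$, the resulting contact Hamilton equations, the tautological matching of momenta and energy at the impact, and the regularity bookkeeping via $c_t'>0$ are all sound and complete.
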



\section{Case Study: Billard with dissipation}\label{sec_billiard}

Consider a particle moving in the plane confined to the surface $C\subset \RR^2$ defined by $x^2+y^2\leq1$. The Lagrangian $L:T\RR^2 \times \RR\to \RR$ is given by
\begin{equation}
  L (x, y, \dot x, \dot y, z)
  = \frac{1}{2} \left( \dot x^2 + \dot y^2  \right)
  - \gamma z,
\end{equation}
where $\gamma$ is a real constant.
Herglotz equations thus yield
\begin{equation}
\begin{array}{lll} 
  \ddot x = - \gamma  \dot x,
  &\ddot y = - \gamma  \dot y,
  & \dot z = L (x, y, \dot x, \dot y, z).
\end{array}
\end{equation}
Their solutions for the initial conditions $x(0)=x_0,\ y(0)=y_0,\ \dot x(0)=\dot x_0,\ \dot y(0)=\dot y_0$ and $E_L(0)=E_0$ are
\begin{equation*}
\begin{aligned} 
    &x(t) = x_0 + \frac{\dot x_0}{\gamma} \left( 1- e^{-t\gamma}  \right),\\
    &y(t) = y_0 + \frac{\dot y_0}{\gamma} \left( 1- e^{-t\gamma}  \right),\\
    &z(t) = - \frac{1}{\gamma} \left[ \frac{1}{2} \left( \dot x^2_0 + \dot y^2_0  \right) e^{-2t\gamma} +E_0 e^{-t\gamma} \right].
\end{aligned}
\end{equation*}

Observe that $v\in T_{(x,y)} \partial C$ if and only if $\dd h(v)=0$, where $h(x,y)=1-x^2-y^2$ is the function characterizing the surface $C$, so $v=v_x \partial_x -\tfrac{x}{y}v_x \partial_y$.
Conditions \eqref{conditions_impact} can then be written as
\begin{equation*}
\begin{aligned} 
  & \dot x^- -\frac{x}{y} \dot y^- = \dot x^+ -\frac{x}{y} \dot y^+ , \\
  & \frac{1}{2}\left( (\dot x^-)^2 + (\dot y^-)^2 \right) + \gamma z_{\mid t_i^-}
  =  \frac{1}{2}\left( (\dot x^+)^2 + (\dot y^+)^2 \right) + \gamma z_{\mid t_i^+},
\end{aligned}
\end{equation*}
where $\dot x^\pm = \dot x(t_i^\pm)$ and $\dot y^\pm = \dot y(t_i^\pm)$. Now, $z_{\mid t_i^+}=z_{\mid t_i^-}$, so
\begin{align}
    & \dot x^+ = \frac{-\dot x^- x^2+\dot x^- y^2-2 \dot y^- x y}{x^2+y^2},\\
    & \dot y^+ = \frac{-2 \dot x^- x y+\dot y^- x^2-\dot y^- y^2}{x^2+y^2}.
\end{align}
In polar coordinates,
\begin{equation}
    L(r, \theta, \dot r, \dot \theta,z ) = \frac{1}{2}\left(\dot r^2 + r^2 \dot \theta^2 \right) - \gamma z.
\end{equation}
One can check that $\ell = r^2\dot \theta$ is a dissipated quantity, namely,
\begin{equation}
    \frac{\dd \ell}{\dd t} = \frac{\partial L}{\partial z} \ell = -\gamma \ell,
\end{equation}
so we can write $\ell = \ell_0 e^{-\gamma t}$, where $\ell_0=\ell(t=0)$. The equations of motions outside the impact surface can thus be expressed as
\begin{equation}
\begin{aligned}
    &\ddot r = -\gamma \dot r + \frac{\ell_0}{r}e^{-\gamma t},\\
    &\dot \theta = \frac{\ell_0}{r^2}e^{-\gamma t}.   
\end{aligned}
\label{eqs_motion_polar}
\end{equation}
On the other hand, deriving $r^2=x^2+y^2$ we obtain
$2r\dot r=2(x\dot x+ y\dot y)$, so
\begin{equation}
\begin{aligned}
    r\dot r^+ &= x\dot x^+ + y\dot y^+\\
    &= \frac{x\left(-\dot x^- x^2+\dot x^- y^2-2 \dot y^- x y\right)}{x^2+y^2} \\
    & + \frac{y\left(-2 \dot x^- x y+\dot y^- x^2-\dot y^- y^2\right)}{x^2+y^2} \\
    & = -\left( x\dot x^- +y \dot y^-\right) = -r\dot r^-,
\end{aligned}
 \label{eqs_impact_r}
\end{equation}
hence $\dot r^+ = -\dot r^-$. Similarly, deriving $\theta=\arctan(y/x)$ yields
\begin{equation}
\begin{aligned}
    \dot \theta^+ &= \frac{1}{1+(y/x)^2} \left(\frac{x\dot y^+  - y \dot x^+}{x^2}\right)\\
    &= \frac{x\left(-2 \dot x^- x y+\dot y^- x^2-\dot y^- y^2\right)}{(x^2+y^2)^2} \\
    &+ \frac{y\left(-\dot x^- x^2+\dot x^- y^2-2 \dot y^- x y\right)}{(x^2+y^2)^2} \\
    &= \frac{(-y\dot x^- + x \dot y^-)(x^2+y^2)}{(x^2+y^2)^2} = \dot \theta^-.
\end{aligned}
 \label{eqs_impact_theta}
\end{equation}
Making use of Eqs.~\eqref{eqs_motion_polar}, \eqref{eqs_impact_r} and \eqref{eqs_impact_theta} we have performed a python numerical simulation for $\gamma = 10^{-4}$ and initial values $x(0)=0.5,\ y(0)=0,\ \dot x(0)=1,\ \dot y(0)=1$.
\begin{figure}[t]
    \centering
    \includegraphics[width=.6\linewidth]{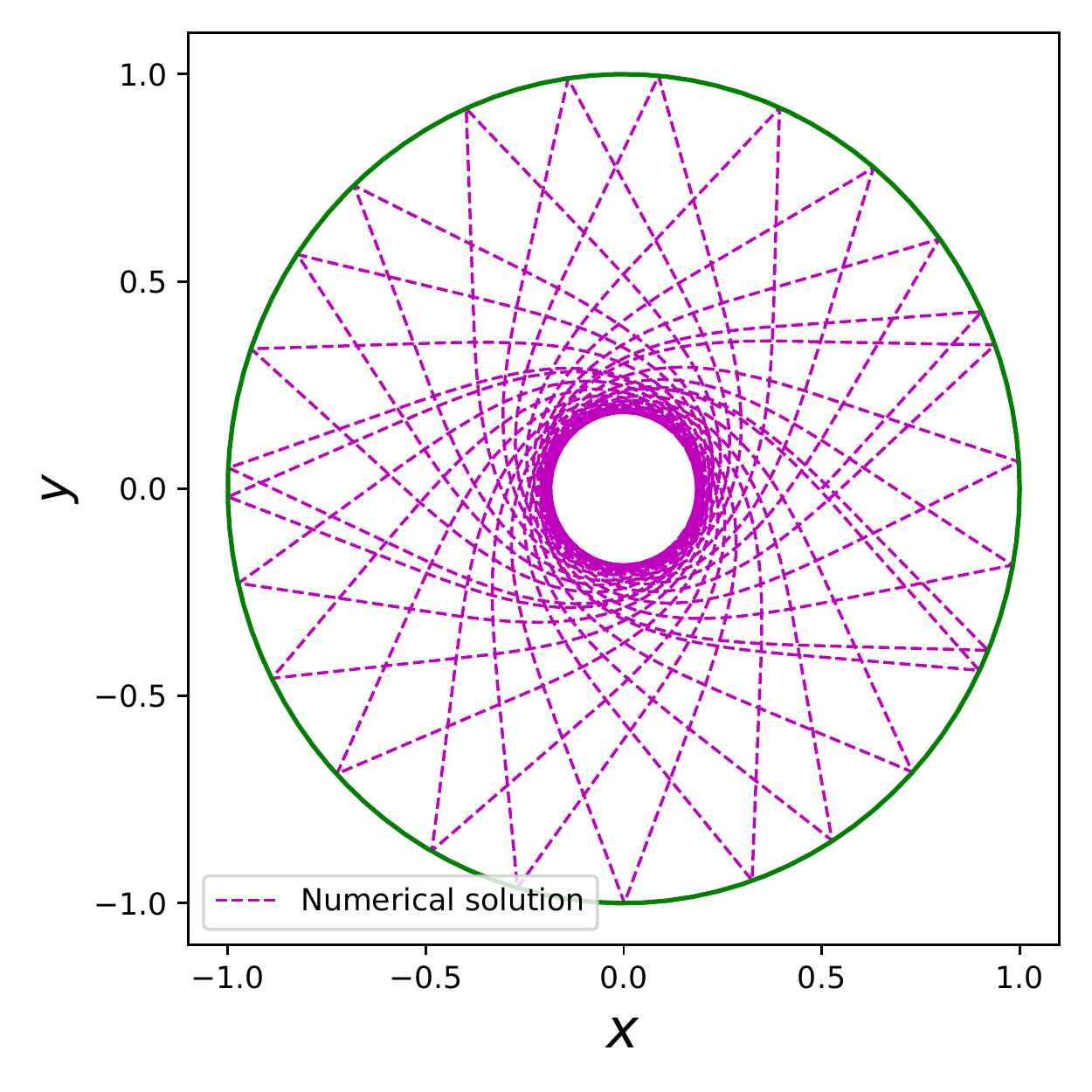}
    \caption{Numerical simulation for the trajectory of a particle in the billiard, with $\gamma = 10^{-4}$.}
    \label{fig:simulation}
\end{figure}


Similarly, the Hamiltonian
$H:T^\ast \RR^2 \times \RR\to \RR$ is given by
\begin{equation}
  H (x, y, p_x, p_y, z)
  = \frac{1}{2} \left( p_x^2 + p_y^2  \right)
  + \gamma z.
\end{equation}
From Proposition \ref{Herglotz_Hamiltonian} we can obtain the trajectories of the system:
\begin{equation}
\begin{array}{llll}
   \dfrac{\mathrm{d} x} {\mathrm{d}t} 
 = p_x,
&\dfrac{\mathrm{d} y} {\mathrm{d}t} 
 = p_y,
 &
  \dfrac{\mathrm{d}p_x} {\mathrm{d}t}
= -p_x \gamma,
  & \dfrac{\mathrm{d}p_y} {\mathrm{d}t}
= -p_x \gamma,
\end{array} 
\end{equation}
for $t\neq t_i$, and
\begin{align}
    & p_x^+ = \frac{-p_x^- x^2+p_x^- y^2-2 p_y^- x y}{x^2+y^2},\\
    & p_y^+ = \frac{-2 p_x^- x y+p_y^- x^2-p_y^- y^2}{x^2+y^2},
    \label{impact_map_billiard}
\end{align}
where $p_x^\pm = p_x(t_i^\pm)$ and $p_y^\pm = p_y(t_i^\pm)$. 

\subsection{Elliptical billiard}

Suppose now that the particle is confined to the surface $C=\left\{\left(\frac{x}{a}\right)^2 + \left(\frac{y}{b}\right)^2\leq 1\right\}$, where $a$ and $b$ are positive constants. Then, from conditions \eqref{conditions_impact} we obtain
\begin{align}
   &\dot x^+= \frac{a^4 \dot x^-  y^2-2 a^2 b^2 \dot y^-  x y-b^4 \dot x^-  x^2}{a^4 y^2+b^4 x^2},\\
   & \dot y ^+ = \frac{-a^4 \dot y^-  y^2-2 a^2 b^2 \dot x^-  x y+b^4 \dot y^-  x^2}{a^4 y^2+b^4 x^2},
\end{align}
or, in polar coordinates, 
\begin{equation}
\begin{aligned}
  \dot r^+ =&\ \frac{r}{4 r  \left(a^4 \sin ^2(\theta )+b^4 \cos ^2(\theta )\right)}
            \\&\times
      \left[2 r \dot \theta^-  \left(b^4-a^4\right) \sin (2 \theta )+2 \dot r^-  \left(a^4-b^4\right) \cos (2 \theta )
            \right. \\ & \left.
      +r \dot \theta^-  \left(a^2-b^2\right)^2 \sin (4 \theta )-\dot r^-  \left(a^2-b^2\right)^2 \cos (4 \theta )
            \right. \\ & \left.
      -\dot r^-  \left(a^2+b^2\right)^2\right],
      \label{eqs_impact_r_ellipse}
\end{aligned}
\end{equation}
and
\begin{equation}
\begin{aligned}
      \dot \theta^+ =&\ \frac{r}{4 r  \left(a^4 \sin ^2(\theta )+b^4 \cos ^2(\theta )\right)}
            \\ &\times
     \left[2 r \dot \theta^-  \left(b^4-a^4\right) \sin (2 \theta )+2 \dot r^-  \left(a^4-b^4\right) \cos (2 \theta )
             \right. \\ & \left.
     +r \dot \theta^-  \left(a^2-b^2\right)^2 \sin (4 \theta )-\dot r^-  \left(a^2-b^2\right)^2 \cos (4 \theta )
             \right. \\ & \left.
     -\dot r^-  \left(a^2+b^2\right)^2\right].
     \label{eqs_impact_theta_ellipse}
\end{aligned}
\end{equation}
Making use of Eqs.~\eqref{eqs_motion_polar}, \eqref{eqs_impact_r_ellipse} and \eqref{eqs_impact_theta_ellipse} we have performed a python numerical simulation for $\gamma = 10^{-4},\ a=0.9, b=1.1$, and initial values $x(0)=0.5,\ y(0)=0,\ \dot x(0)=1,\ \dot y(0)=1.2$.
\begin{figure}[t]
    \centering
    \includegraphics[width=.6\linewidth]{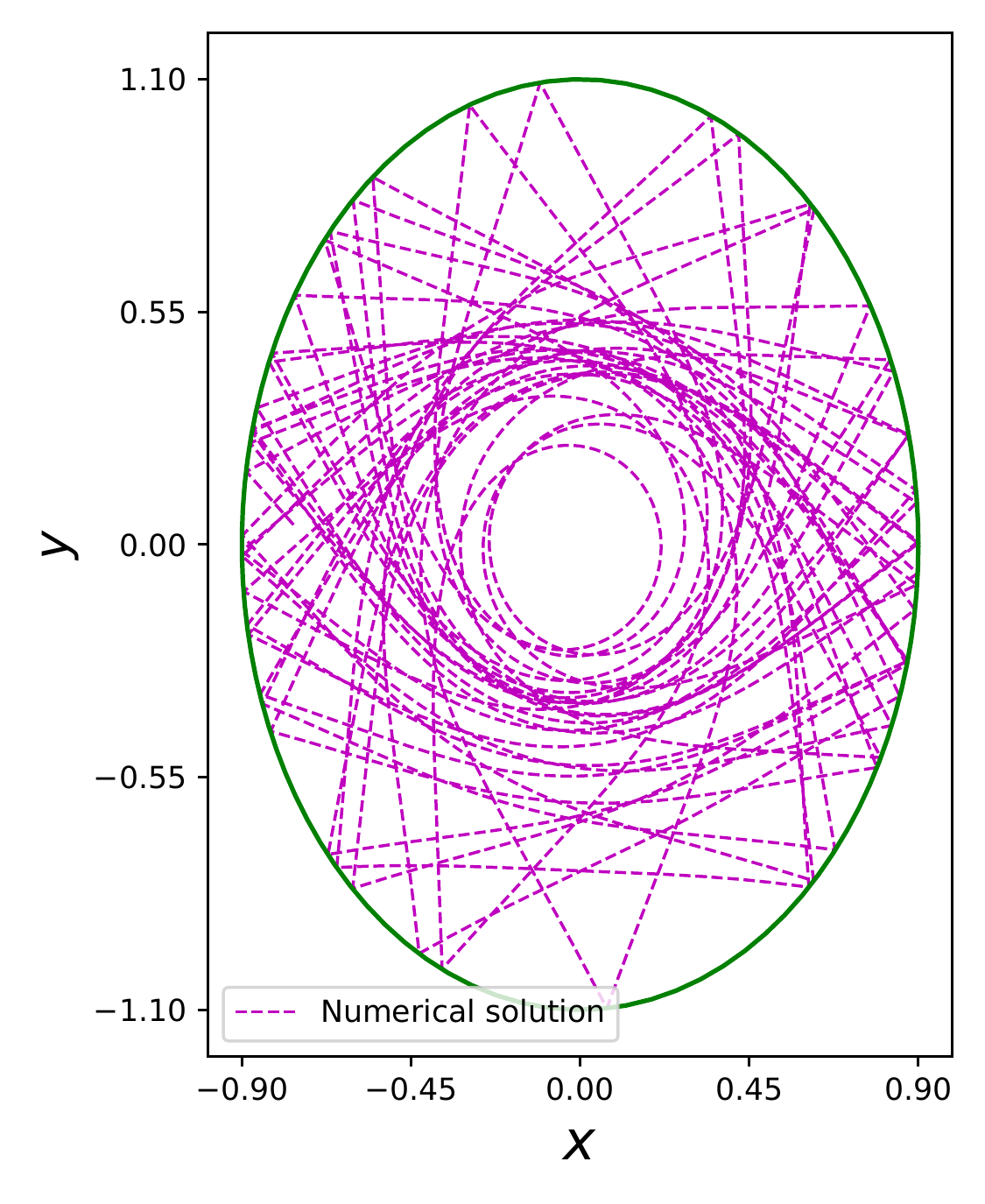}
    \caption{Numerical simulation for the trajectory of a particle in an elliptical billiard, with $\gamma = 10^{-4}$.}
    \label{fig:simulation}
\end{figure}

\section{Relation with contact Lagrangian and Hamiltonian systems}\label{sec_contact}
In this section, we briefly recall the relation between action-dependent Lagrangians and contact Hamiltonian and Lagrangian systems, in order to relate Nonsmooth Herglotz principle with hybrid contact systems in the following section. In particular, we recall that the dynamics of a regular contact Lagrangian system are given by the Herglotz equations. For more details, we refer to \cite{deLeon2021a, Rivas2022,deLeon2019,Gaset2020,deLeon2019a,deLeon2021} and references therein.

Symplectic geometry is the natural framework for Hamiltonian mechanics, and regular Lagrangians define a symplectic structure on $TQ$ as well. Similarly, contact geometry is the natural framework
for Hamiltonian systems with dissipation (the so-called contact Hamiltonian systems) and regular action-dependent Lagrangians.

A \emph{contact manifold} is a pair $(M, \eta)$, where $M$ is an $(2 n+1)$-dimensional manifold and $\eta$ is a $1$-form on $M$ such that $\eta \wedge(\mathrm{d} \eta)^{n}$ is a volume form. This $1$-form $\eta$ is called a \emph{contact form}. Given a contact manifold $(M,\eta)$, there exists a unique vector field $\Reeb$ on $M$ such that
\begin{equation*}
   \contr{\Reeb} \dd \eta = 0,\,\,\,  
   \contr{\Reeb} \eta = 1.
\end{equation*}
The vector field $\Reeb$ is called the \emph{Reeb vector field}.

In \emph{Darboux coordinates} $(q^i, p_i, z)$, the contact form is locally written
\begin{equation*}
  \eta = \dd z - p_i \dd q^i,
\end{equation*}
and the Reeb vector field is $\Reeb = \frac{\partial  } {\partial z}$. The contact structure $\eta$ on $M$ defines the \emph{musical isomorphisms} $\flat : TM \to T^*M$
\begin{equation*}
\begin{aligned}
  \flat \colon
  v & \mapsto \contr{v} \dd \eta + \eta(v) \eta,
\end{aligned}
\end{equation*}
and $\sharp = \flat^{-1}$.

Given a Hamiltonian function $H$ on $(M,\eta)$, we define the \emph{(contact) Hamiltonian vector field} $X_H$ by
\begin{equation}
  \flat(X_H) = \dd H - \left(\Reeb (H) + H  \right) \eta.
  \label{Hamiltonian_vector_field}
\end{equation}
The triple $(M,\eta, H)$ is called a \emph{(contact) Hamiltonian system}. In Darboux coordinates,
\begin{equation}
  X_H =   \frac{\partial H}{ \partial p_i} \frac{\partial  } {\partial q^i} - \left(\frac{\partial H}{\partial q^i} + p_i\frac{\partial H}{\partial z}\right)\frac{\partial}{\partial p_i} + \left(p_i\frac{\partial H}{\partial p_i} - H\right)\frac{\partial }{\partial z}.
\end{equation}
Additionally, the following identities hold:
\begin{subequations}
\begin{flalign}
  &X_H(H) = -\Reeb(H)\ H \label{lieder_Hamiltonian},\quad \contr{X_H} \dd \eta = \dd H- \Reeb(H) \eta.
\end{flalign} \label{identities_Hamiltonian_vec}
\end{subequations}

Let $Q$ be an $n$-dimensional manifold. Consider a Lagrangian function $L:TQ\times \RR \to \RR$ and let us introduce the 1-form
$\alpha_L=S^*(\dd L)$, where $S^*$ is the adjoint operator of the vertical endomorphism $S$ on $TQ$ extended in the natural way to $TQ\times \RR$. Let $(q^i)$ be local coordinates on $Q$ and let $(q^i, \dot q^i, z)$ be the induced coordinates on $TQ\times \RR$.
 Locally, $S = \dd q^i \otimes \frac{\partial  } {\partial  \dot{q}^i}$, so $\alpha_L = \frac{\partial L} {\partial \dot q^i} \dd q^i$. Let $\eta_L$ be a 1-form on $TQ\times \RR$ given by
$\displaystyle{\eta_L = \dd z - \alpha_L = \dd z - \frac{\partial L}{\partial \dot q^i} \dd q^i}$. 

One can show that $\eta_L$ is a contact form if and only if $L$ is \emph{regular}, i.e., the Hessian matrix $(W_{ij}) = \left(\frac{\partial ^2 L} {\partial \dot q^i \dot q^j}  \right)$ is regular. Hereinafter, we shall assume that $L$ is regular. The \emph{energy} of the system is given by $E_L = \Delta(L) - L$, 
where $\Delta = \dot q^i \partial / \partial \dot q^i$ is the Liouville vector field on $TQ$ naturally extended to $TQ\times \RR$. Hence, $(TQ\times \RR, \eta_L, E_L)$ is a contact Hamiltonian system. 
Its corresponding Reeb vector field $\Reeb_L$ is locally 
\begin{equation*}
  \Reeb_L = \frac{\partial  } {\partial z} - W^{ij} \frac{\partial ^2 L } {\partial \dot q^i \partial z} \frac{\partial  } {\partial \dot q^j}, 
\end{equation*}
where $(W^{ij})$ is the inverse matrix of $(W_{ij})$. The dynamics is given by the \emph{Lagrangian vector field} $\sode_L$, defined by
\begin{equation}
  \flat_L (\sode_L) = \dd E_L - \left(E_L + \Reeb_L(E_L)  \right) \eta_L,
  \label{SODE_energy}
\end{equation}
where $\flat_L$ denotes the musical isomorphism defined by the contact form $\eta_L$. 
Eqs.~\eqref{identities_Hamiltonian_vec} are now written as
\begin{flalign}
  &\Gamma_L(E_L) = -\Reeb_L(E_L)\ E_L \label{lieder_energy},\\
  &\contr{\Gamma_L} \dd \eta_L = \dd E_L- \Reeb_L(E_L) \eta.
\end{flalign} \label{identities_Lagrangian_SODE}

\begin{remark}
Equations \eqref{lieder_energy} and \eqref{equation_energy_dissipation} are equivalent. As one can check straightforwardly, 
$
  \Reeb_L (E_L) = - \frac{\partial L} {\partial z},
$
so
\begin{equation}
  \frac{\mathrm{d}E_L} {\mathrm{d}t} = - \Reeb_L(E_L) E_L
\end{equation}
along the solutions of the Herglotz equations \eqref{equation_Herglotz}.
\end{remark}

Let us recall that a vector field $\sode$ on $TQ\times \RR$ is called a \emph{SODE} (an acronym for \emph{second order differential equation}) if $S(\sode) = \Delta$. 

If the Lagrangian $L$ is regular, one has the following equivalence between the Herglotz equations and the geometric dynamical equations (see Ref.~\cite{deLeon2019}).

\begin{theorem}
Let $L$ be a regular contact Lagrangian system on $TQ\times \RR$, and let $\sode_L$ be the Hamiltonian vector field associated with the energy (given by Eq.~\eqref{SODE_energy}). Then
\begin{enumerate}
\item $\sode_L$ is a SODE on $TQ\times \RR$,
\item the integral curves of $\sode_{L}$ are solutions of the Herglotz equations \eqref{Herglotz_eqs}.
\end{enumerate}
\end{theorem}

If $L$ is regular, locally
\begin{align}
     \sode_L &=  \dot{q}^i\parder{}{q^i} + L\parder{}{z}\\
     &+ W^{ji}\left( \parder{L}{q^j}- \dot{q}^k\parderr{L}{q^k}{\dot{q}^j} - L\parderr{L}{z}{\dot{q}^j} + \parder{L}{z}\parder{L}{\dot{q}^j} \right)\parder{}{\dot{q}^i} .
\end{align}

\section{Hybrid contact systems}\label{sec_hybrid}
In the previous section, we have related the (smooth) Herglotz principle for action-dependent Lagrangians with contact Hamiltonian and Lagrangian systems. In this section, we introduce a similar counterpart for the nonsmooth Herglotz principle, by means of introducing the notion of simple hybrid contact Hamiltonian and Lagrangian systems.

A mechanical system with impacts can be characterized by the space in which its dynamics evolve, the surface in which the impacts take place, a vector field whose integral curves are the trajectories of the system inside the dynamical space and a map describing the dynamics in the impact surface.

A \emph{simple hybrid system} \cite{Johnson1994, Westervelt2018} is a tuple $\mathscr{H}=(D, X, S, \Delta)$, where $D$ is a smooth manifold called the \myemph{domain}, $X$ is a smooth 
vector field on $D$, $S$ is an embedded submanifold of $D$ with co-dimension $1$ called the \myemph{switching surface}, and $\Delta:S\to D$ is a smooth embedding called the \myemph{impact map}. The dynamics of $\mathscr{H}$ are given by
\begin{equation}
  \begin{array}{ll}
    \dot{\sigma}(t)=X(\sigma(t)), & \text{if } \sigma^{-}(t)\notin{S}, \\ 
    \sigma^{+}(t)=\Delta(\sigma^{-}(t)),& \text{if }\sigma^-(t)\in{S}, 
  \end{array}
\end{equation}
where $\sigma:I\subset\mathbb{R}\to D$, and $\sigma^{-}$, $\sigma^{+}$ denote the states immediately before and after the times when $\sigma$ intersects ${S}$, namely $\sigma^{-}(t)\coloneqq \displaystyle{\lim_{\tau\to t^{-}}}x(\tau)$,\, $\sigma^{+}(t)\coloneqq \displaystyle{\lim_{\tau\to t^{+}}}x(\tau)$ are the left and right limits of the state trajectory $\sigma(t)$.

\begin{definition}
A simple hybrid system $\mathscr{H}_H=(D, X, S, \Delta)$ is called a \emph{simple hybrid contact Hamiltonian system} if $(D, \eta)$ is a contact manifold and $X$ is the contact Hamiltonian vector field associated with the Hamiltonian function $H$. The triple $(D, \eta, H)$ will be called the \emph{underlying contact Hamiltonian system}
\end{definition}

Consider a simple hybrid contact Hamiltonian system $\mathscr{H}_H=(D, X_H, S, \Delta)$ such that $D=T^\ast Q \times \R$, where $(Q, \langle \cdot, \cdot \rangle)$ is a Riemannian manifold. Suppose that the switching surface is of the form
$$S=\{(q,p,z)\in T^{*}Q\times \R: h(q)=0\},$$ 
where $\langle\langle \cdot, \cdot \rangle\rangle_q$ denotes the inner product induced by $\langle \cdot, \cdot \rangle$ on $T_q^\ast Q$.
 Additionally, assume\footnote{This is always the case for physically acceptable impacts.} that $\Delta$ fibres over $Q\times \R$, i.e., $\Delta(q, p, z)=\left(q, \Delta_p(q, p,z), z\right)$.
 Then, the impact map $\Delta$ satisfies the impact equations \eqref{eqs_impact_Hamiltonian_Herglotz} given by the Hamiltonian nonsmooth Herglotz principle if and only if
\begin{equation}
\begin{aligned} 
 & p_i^-  v^i 
  = p_i^+  v^i ,\\
  &H (q, p^-, z ) = H(q, p^+, z ),
\end{aligned}
\end{equation}
for any $v \in T_q (h^{-1}(0))$, where $\Delta(q, p^-, z)=(q, p^+, z)$.
If that is the case,  $\mathscr{H}_H$ is called a {\emph{simple hybrid Hamiltonian Herglotz system}}.

\begin{remark}
 It is worth noting that the dynamics of the simple hybrid Hamiltonian Herglotz system defined above coincide with the ones given by the Hamiltonian nonsmooth Herglotz principle \ref{Herglotz_Hamiltonian} for the Hamiltonian function $H$. That is, the integral curves of $X$ satisfy Eq.~\eqref{eqs_vector_Hamiltonian_Herglotz} and the impact map satisfies Eq.~\eqref{eqs_impact_Hamiltonian_Herglotz}. 
\end{remark}

\begin{definition}
A \emph{simple hybrid contact Lagrangian system} $\mathscr{H}_L=(D, X, S, \Delta)$ is a simple hybrid contact Hamiltonian system with underlying contact Hamiltonian system $(TQ\times \R, \eta_L, E_L)$, where $\eta_L$ and $E_L$ denote the contact form and the energy associated with a regular Lagrangian function $L$, respectively. In other words, the domain is of the form $D=TQ\times \R$, and the vector field $X$ is the SODE $\Gamma_L$ associated with the Lagrangian $L$.
\end{definition}

Suppose that $(Q, \langle \cdot, \cdot \rangle)$ is a Riemannian manifold and that the switching surface is of the form
$$S=\{(q,\dot q,z)\in T Q\times \R: h(q)=0\}.$$ 
 Assume that $\Delta$ fibres over $Q\times \R$.
 Then, the impact map $\Delta$ satisfies the impact equations \eqref{conditions_impact} given by the Nonsmooth Herglotz principle if and only if
\begin{subequations}
\begin{flalign}
  & \left.\frac{\partial L} {\partial \dot q^i}\right|_{(q, \dot q^-, z)}  v^i
  = \left.\frac{\partial L} {\partial \dot q^i}\right|_{(q, \dot q^+, z)} v^i, \\
  & E_L (q, \dot q^-, z)   = E_L (q, \dot q^+, z)
\end{flalign} 
\end{subequations}
for any $v \in T_q (h^{-1}(0))$, where $\Delta(q, \dot q^-, z)=(q, \dot q^+, z)$.
If that is the case,  $\mathscr{H}_L$ is called a {\emph{simple hybrid Lagrangian Herglotz system}}.

\begin{example}
The billiard with dissipation (see Section \ref{sec_billiard}) can be described by the simple hybrid Hamiltonian Herglotz system
$\mathscr{H}_H=(T^\ast \R^2 \times \R, X_H, S, \Delta)$, with
\begin{align}
 X_H 
&=  p_x \frac{\partial  } {\partial x} + p_y \frac{\partial  } {\partial y} - \gamma p_x \frac{\partial  } {\partial p_x} - \gamma p_y \frac{\partial  } {\partial p_y}\\
&+\left( \frac{1}{2} (p_x^2+p_y^2) - \gamma z\right) \frac{\partial  } {\partial z}
, \\
 S& = \left\{(x,y, p_x, p_y, z)\in T^\ast \R^2 \times \R \mid x^2+y^2 =1  \right\},
\end{align}
and $\Delta(x, y, p_x^-, p_y^-, z)=(x, y, p_x^+, p_y^+, z)$, where $p_x^+$ and $p_y^+$ are given by Eqs.~\eqref{impact_map_billiard}.


\end{example}

\section{Conclusions and Future work}\label{sec_conclusions}


In this paper we have developed a non-differentiable Herglotz variational principle that allows us to deal with impact problems. This principle is connected with the formulation of hybrid contact systems, and also takes into account the inherent dissipation of these systems in addition to the impact problem.

In subsequent work we shall study a number of issues related to those discussed in this paper:
\begin{itemize}
\item The reduction of this type of systems when there are symmetries that leave the Lagrangian invariant.
\item The construction of variational integrators that preserve the qualitative behaviour of the system.
\item Additionally, we want to prove a Carnot Theorem that accounts for the energy lost or gained in these types of situations.
\end{itemize}

\let\emph\oldemph
\bibliography{biblio}
\bibliographystyle{IEEEtran}
\let\oldemph\emph


\end{document}